\newcommand{\RR}{\mathbb{R}}
\newcommand{\sign}{\mathrm{sign}}
\newcommand{\prox}{{\mathbf{prox} }} 
\newcommand{\shrink}{{\mathbf{shrink} }} 
\DeclareMathOperator*{\argmin}{arg\,min}
\DeclareMathOperator*{\Min}{minimize}
\DeclareMathOperator*{\Max}{maximize}
\newcommand{\st}{{\quad\text{s.t.}~}}
\newcommand{\bc}{\begin{center}}
\newcommand{\ec}{\end{center}}
\newcommand{\bdm}{\begin{displaymath}}
\newcommand{\edm}{\end{displaymath}}
\newcommand{\beq}{\begin{equation}}
\newcommand{\eeq}{\end{equation}}
\newcommand{\bfl}{\begin{flushleft}}
\newcommand{\efl}{\end{flushleft}}
\newcommand{\bt}{\begin{tabbing}}
\newcommand{\et}{\end{tabbing}}
\newcommand{\beqn}{\begin{eqnarray}}
\newcommand{\eeqn}{\end{eqnarray}}
\newcommand{\beqs}{\begin{align*}} 
\newcommand{\eeqs}{\end{align*}}  
\newtheorem{theorem}{Theorem}
\newtheorem{definition}{Definition}
\newtheorem{corollary}{Corollary}
\newtheorem{lemma}{Lemma}
\begin{document}

\title{Projected shrinkage algorithm for box-constrained $\ell_1$-minimization}

\author{Hui Zhang  \and LiZhi Cheng \thanks{
College of Science, National University of Defense Technology,
Changsha, Hunan,  China, 410073. Emails: \texttt{h.zhang1984@163.com}( Hui Zhang); \texttt{clzcheng@nudt.edu.cn}(Lizhi Cheng).}
}
\date{\today}

\maketitle

\begin{abstract}
 Box-constrained $\ell_1$-minimization can perform remarkably better than classical $\ell_1$-minimization when correction box constraints are available. And also many practical $\ell_1$-minimization models indeed involve box constraints because they take certain values from some interval. In this paper, we propose an efficient iteration scheme, namely projected shrinkage (ProShrink) algorithm, to solve a class of box-constrained $\ell_1$-minimization problems. A key contribution in our technique is that a complicated proximal point operator appeared in the deduction can be equivalently simplified into a projected shrinkage operator. Theoretically, we prove that ProShrink enjoys a convergence of both the primal and dual point sequences. On the numerical level, we demonstrate the benefit of adding box constraints via sparse recovery experiments.
\end{abstract}

\textbf{Keywords:}  proximal point operator; projected shrinkage; box constraints; $\ell_1$-minimization; sparse recovery

\section{Introduction}
The past two decades has witnessed the wide application of $\ell_1$-minimization models in signal and image processing, compressive sensing, machine learning, statistic, and more. The success of $\ell_1$ minimization is mainly due to that the $\ell_1$-norm can well reflect sparse prior. Recently, it was observed that other auxiliary information of sparse solutions, such as partial support set \cite{VL} and nonnegative sparsity \cite{b,d}, could help fit practical models.
In this paper, instead of studying the theoretical benefit of modeling auxiliary priors, we are interested in designing efficient algorithms to solve the $\ell_1$-minimization problems with auxiliary box constraints:
\begin{equation}\label{BP}
\Min_x\|x\|_1,\quad\st~A x=b, x\in \mathcal{X}
\end{equation}
and
\begin{equation}\label{Aug}
\Min_x\|x\|_1+\frac{1}{2\tau}\|x\|_2^2,\quad\st~A x=b, x\in \mathcal{X}
\end{equation}
where  $A\in \RR^{n\times m}, b\in\RR^m$ are given, $\tau$ is an augmented parameter, and $\mathcal{X}$ is some box-constrained set. The above problems are obtained separately by imposing box constraints to the basis pursuit model \cite{cd}:
\begin{equation}\label{BPo}
\Min_x\|x\|_1,\quad\st~A x=b
\end{equation}
and the augmented $\ell_1$ norm model \cite{ly}:
\begin{equation}\label{Augo}
\Min_x\|x\|_1+\frac{1}{2\tau}\|x\|_2^2,\quad\st~A x=b,
\end{equation}
both of which have been proved powerful for sparse recovery. Adding box constraints to classical $\ell_1$ minimization on one hand extends the range of models \eqref{BPo} and \eqref{Augo} to include more practical models in application, and on the other hand can help improve the ability of sparse recovery of them when correct box constraints are available; the second point of view shall be demonstrated numerically later on.  Similar benefit of adding box constraints to classical matrix completion has been observed in a recent paper \cite{tmr} which was posted on arXiv at the time of the writing of the present paper.

Due to the existing of the strongly convex term $\frac{1}{\tau}\|x\|_2^2$, it has been explained in several papers \cite{z4,z5,ly} that models \eqref{Aug} and \eqref{Augo} have computational advantages over their correspondences \eqref{BP} and \eqref{BPo}. Besides, applying the proximal point algorithm \cite{pb} to models \eqref{BP} or \eqref{BPo}  generates a series of subproblems similar to \eqref{Aug} or \eqref{Augo}. Therefore, the center assignment of solving problems \eqref{BP}-\eqref{Augo} reduces to studying the following generalized problem
\begin{equation}\label{cent}
\Min_x\|x\|_1+\frac{1}{2\tau}\|x-u\|_2^2,\quad\st~A x=b, x\in \mathcal{X}
\end{equation}
where $u$ is a given vector.
With the help of the Lagrange dual analysis and by noticing the strong convexity of the objective function, in this study we derive a projected shrinkage (ProShink) algorithm for solving \eqref{cent}. By the Nesterov techniques \cite{n1}, the proposed algorithm can be speeded up; we present an accelerated scheme as well.  Theoretically, we prove the convergence of both the primal and dual point sequences of ProShink.  A key contribution in our technique is that a complicated proximal operator appeared in the deduction can be equivalently simplified into a projected shrinkage operator. This can also be applied to simplifying standard forward-backward splitting algorithm for the boxed-constrained basis pursuit denoising problem:
\begin{equation}\label{PFB}
\Min_{x\in\mathcal{X}}\|x\|_1+\frac{1}{2\lambda}\|Ax-b\|_2^2,
\end{equation}
where $\lambda$ is a positive paramter.

The rest of paper is organized as follows. In section 2, we introduce some basis concepts of constrained convex optimization and obtain important properties about the shrinkage operator. In section 3, under the Lagrange dual analysis, we propose the ProShrink algorithm and prove its convergence, and meanwhile we present detailed iteration schemes for solving models \eqref{BP}, \eqref{Aug}, and \eqref{PFB}.  In section 4, we do sparse recovery experiments to demonstrate the benefit of adding box constrains.

\section{Notation and important properties}
In this paper, we restrict our attention onto two classes of intervals.
The first class is:
 \begin{subequations}
\begin{align*}
T_1=\{I: &  I=[c, \infty), c>0, ~~\textrm{or}~~I=(-\infty, c], c<0, ~~\textrm{or} \\
& I=[c, d], 0<c<d ~~\textrm{or}~~I=[d, c], d<c<0\};
\end{align*}
\end{subequations}
The second class is
$$T_2=\{I: I=[c, d], c<0<d\}.$$
The box constraint $\mathcal{X}$ appeared in all models mentioned before is defined as $\mathcal{X}=I_1\times I_2\times \cdots \times I_n$, where $I_i\in T_1\bigcup I_2$.  Throughout this paper, we assume that $\mathcal{X}\bigcap \{x: Ax=b\}\neq \emptyset$.

\subsection{Basic concepts and properties}
First, we introduce proximal point operator and its important properties.
\begin{definition}
Let $f: \RR^n\rightarrow R\cup\{+\infty\}$ be a closed proper convex function. The proximal operator \cite{m}   $\prox_f :\RR^n\rightarrow \RR^n$ is defined by
\begin{equation}
\prox_f(v)=\argmin_x\left(f(x)+\frac{1}{2}\|x-v\|_2^2\right).
\end{equation}
\end{definition}
Since the objective function is strongly convex and proper, $\prox_f(v)$ is properly defined for every $v\in\RR^n$.  The following properties \cite{m,pb} will be used in our analysis.
 \begin{lemma}\label{lem1}
 Let $f: \RR^n\rightarrow R\cup\{+\infty\}$ be a closed proper convex function. Then, for all $x, y\in \RR^n$ the proximal operator $\prox_{f(\cdot)}$ satisfies the followings:
 \begin{enumerate}
  \item Firmly nonexpansive: $$ \|\prox_{f(\cdot)}(x)-\prox_{f(\cdot)}(y)\|_2^2\leq \langle x-y, \prox_{f(\cdot)}(x)-\prox_{f(\cdot)}(y)\rangle$$

  \item Lipschitz continuous: $\|\prox_{f(\cdot)}(x)-\prox_{f(\cdot)}(y)\|_2\leq \|x-y\|_2$.
  \end{enumerate}
 \end{lemma}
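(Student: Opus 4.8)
The plan is to prove the two standard properties of the proximal operator directly from its variational characterization, namely that $y^* = \prox_f(x)$ if and only if $x - y^* \in \partial f(y^*)$. First I would establish this subgradient characterization: since $y^*$ minimizes the strongly convex function $f(y) + \tfrac12\|y - x\|_2^2$, the optimality condition is $0 \in \partial f(y^*) + (y^* - x)$, i.e. $x - y^* \in \partial f(y^*)$. This is the only nontrivial ingredient and it follows from the subdifferential sum rule for a proper closed convex function plus a smooth term.

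Next, for the firm nonexpansiveness, I would write $p = \prox_f(x)$ and $q = \prox_f(y)$, so that $x - p \in \partial f(p)$ and $y - q \in \partial f(q)$. Applying the definition of subgradient (or equivalently monotonicity of $\partial f$) gives
\begin{equation*}
\langle (x - p) - (y - q),\; p - q \rangle \geq 0,
\end{equation*}
which rearranges to $\langle x - y,\; p - q\rangle \geq \|p - q\|_2^2$. That is exactly the firmly nonexpansive inequality in the statement.

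Finally, the Lipschitz (nonexpansiveness) bound follows from the firm nonexpansiveness by Cauchy--Schwarz: from $\|p - q\|_2^2 \leq \langle x - y,\; p - q\rangle \leq \|x - y\|_2 \,\|p - q\|_2$ we divide through by $\|p - q\|_2$ when it is nonzero (and the inequality is trivial otherwise) to get $\|p - q\|_2 \leq \|x - y\|_2$.

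I do not anticipate a genuine obstacle here, since this is a classical result; the only place calling for slight care is the very first step — justifying that the optimality condition for the strongly convex proximal subproblem can be written with the subdifferential of $f$ alone (the sum rule applies because the quadratic term is finite and differentiable everywhere, so no constraint-qualification subtlety arises), and noting that strong convexity guarantees the minimizer $p$, $q$ are well defined and unique so the notation $\prox_f$ is unambiguous.
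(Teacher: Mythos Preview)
Your proof is correct and is the standard argument. Note, however, that the paper does not actually prove this lemma: it is stated with a citation to Moreau and to Parikh--Boyd and used as a known fact, so there is no ``paper's own proof'' to compare against. Your derivation via the optimality condition $x-\prox_f(x)\in\partial f(\prox_f(x))$, monotonicity of $\partial f$, and Cauchy--Schwarz is exactly the classical one found in those references.
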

 If $f$ is fully separable, meaning that $f(x) =\sum_{i=1}^n f_i(x_i)$, then
 \begin{equation} \label{fulls}
(\prox_{f(\cdot)}(x))_i=\prox_{f_i(\cdot)}(x_i).
\end{equation}

Second, we need to introduce convex projected operator and projected subgradient to deal with box constraints.
\begin{definition}[convex projected operator]
$[x]_\mathcal{X}^+:=\arg\min_{y\in\mathcal{X}}\|x-y\|$.
\end{definition}

The following property of  projected operator shall be often encountered in our deduction.
\begin{lemma}\label{projp}
For any interval $I\in T_1\bigcup T_2$, we have that
$[\tau\cdot w]_I^+=\tau\cdot[w]^+_{I/\tau}$
holds for arbitrary $w\in\RR$ and positive parameter $\tau$.
\end{lemma}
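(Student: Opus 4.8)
The plan is to verify the identity $[\tau w]_I^+ = \tau[w]_{I/\tau}^+$ directly from the definition of the projection onto a one-dimensional interval, handling both types of intervals uniformly by writing $I=[\alpha,\beta]$ with $\alpha\in\{c\}\cup\{-\infty\}$ and $\beta\in\{d\}\cup\{+\infty\}$ (so that $I/\tau=[\alpha/\tau,\beta/\tau]$, with the convention $\pm\infty/\tau=\pm\infty$ since $\tau>0$). The key observation is that for a real interval $I$, the projection $[\cdot]_I^+$ is the clamp map: $[w]_I^+=\min\{\max\{w,\alpha\},\beta\}=\mathrm{med}\{\alpha,w,\beta\}$, the median of the three numbers.

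First I would record this clamp formula, noting it covers all of $T_1$ and $T_2$ since those are precisely the (possibly one-sided) closed intervals. Second, I would use the fact that for $\tau>0$ the scaling map $x\mapsto \tau x$ is strictly increasing on $\RR$, hence commutes with $\min$, $\max$, and therefore with the median: $\tau\cdot\mathrm{med}\{\alpha/\tau, w, \beta/\tau\}=\mathrm{med}\{\alpha,\tau w,\beta\}$. Reading the left side as $\tau[w]_{I/\tau}^+$ and the right side as $[\tau w]_I^+$ gives the claim. Alternatively, and perhaps more in the spirit of the paper, one can argue from the variational definition: $[\tau w]_I^+=\argmin_{y\in I}(y-\tau w)^2$; substituting $y=\tau z$ and using that $y\in I \iff z\in I/\tau$ together with $(\tau z-\tau w)^2=\tau^2(z-w)^2$ and $\tau^2>0$, the minimizer in $z$ is $[w]_{I/\tau}^+$, so the minimizer in $y$ is $\tau[w]_{I/\tau}^+$.

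There is no real obstacle here; the only point requiring a line of care is the treatment of the unbounded intervals in $T_1$ (one-sided rays), where one should check that $I/\tau$ is still a ray of the same orientation with the endpoint scaled by $1/\tau$, which is immediate because $\tau>0$ preserves the sign of the endpoint $c$ and the direction of the inequality. I expect the write-up to be a short paragraph: state the clamp/median representation valid on $T_1\cup T_2$, invoke monotonicity of multiplication by $\tau>0$, and conclude.
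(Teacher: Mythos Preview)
Your proposal is correct, and the paper's proof is precisely your ``alternative'' variational argument: it writes $[\tau w]_I^+=\argmin_{y\in I}\|y-\tau w\|=\argmin_{y\in I}\|\tau^{-1}y-w\|=\tau\cdot\argmin_{z\in I/\tau}\|z-w\|=\tau[w]_{I/\tau}^+$, i.e.\ the change of variables $y=\tau z$. Your primary clamp/median route is a small cosmetic variant of the same idea (monotonicity of $x\mapsto\tau x$), so the two approaches coincide in substance.
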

\begin{proof}
We begin with the definition of projected operator and derive that
\begin{subequations}
\begin{align*}
[\tau\cdot w]_I^+  =\arg\min_{y\in I}\|y-\tau w\| =\arg\min_{y\in I}\|\tau^{-1}y- w\|
 =\tau\cdot\arg\min_{z\in I/\tau}\|z- w\|  =\tau\cdot[w]^+_{I/\tau}.
\end{align*}
\end{subequations}
This completes the proof.
\end{proof}

\begin{definition}[projected subgradient]Define
$\partial_{\mathcal{X}}^+ f(x):=\{g:  g=x-[x-h]_\mathcal{X}^+, h\in \partial f(x)\}$
Without confusion, we also denote $\partial_{\mathcal{X}}^+ f(x)$ by $x-[x-\partial f(x)]_\mathcal{X}^+ $.
\end{definition}

With projected subgradient, we can state a necessary and sufficient condition which guarantees a vector to be a minimizer to a class of constrained convex optimization problems.
\begin{lemma}\label{lem:optcond}
Let $f(x)$ be proper convex and $\mathcal{X}$ nonempty, closed, and convex. Then, we have
$$x^*\in \arg\min_{x\in\mathcal{X}} f(x)\Leftrightarrow 0\in \partial_{\mathcal{X}}^+ f(x^*)$$
\end{lemma}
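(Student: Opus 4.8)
The plan is to reduce the constrained optimality condition to the classical unconstrained subdifferential characterization of minimizers over a convex set, namely that $x^*$ minimizes $f$ over $\mathcal{X}$ if and only if there exists $h \in \partial f(x^*)$ with $\langle h, y - x^* \rangle \geq 0$ for all $y \in \mathcal{X}$ (the standard first-order variational inequality). The bulk of the work is then to show that this variational inequality is equivalent to the statement $0 \in x^* - [x^* - \partial f(x^*)]_{\mathcal{X}}^+$, i.e., that there is $h \in \partial f(x^*)$ with $x^* = [x^* - h]_{\mathcal{X}}^+$.

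First I would recall the projection characterization: for a nonempty closed convex set $\mathcal{X}$ and any $v \in \RR^n$, the point $p = [v]_{\mathcal{X}}^+$ is the unique point of $\mathcal{X}$ satisfying $\langle v - p, y - p \rangle \leq 0$ for all $y \in \mathcal{X}$. Applying this with $v = x^* - h$ and $p = x^*$ gives: $x^* = [x^* - h]_{\mathcal{X}}^+$ holds if and only if $x^* \in \mathcal{X}$ and $\langle (x^* - h) - x^*, y - x^* \rangle \leq 0$ for all $y \in \mathcal{X}$, which simplifies to $\langle h, y - x^* \rangle \geq 0$ for all $y \in \mathcal{X}$. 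This is exactly the variational inequality above. So the two steps fit together: $x^* \in \arg\min_{x \in \mathcal{X}} f(x)$ iff (by the classical condition) there exists $h \in \partial f(x^*)$ with $\langle h, y - x^* \rangle \geq 0$ for all $y \in \mathcal{X}$, iff (by the projection characterization) there exists $h \in \partial f(x^*)$ with $x^* = [x^* - h]_{\mathcal{X}}^+$, iff (by definition of $\partial_{\mathcal{X}}^+$) $0 \in \partial_{\mathcal{X}}^+ f(x^*)$.

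The one subtlety — and the place I expect the main obstacle — is the classical first-order condition itself: for a proper convex $f$ whose domain may not meet $\mathcal{X}$ in its relative interior, the claim ``$x^*$ minimizes $f + \iota_{\mathcal{X}}$ iff $\exists h \in \partial f(x^*)$ with $\langle h, y - x^*\rangle \geq 0$ on $\mathcal{X}$'' requires the subdifferential sum rule $\partial(f + \iota_{\mathcal{X}})(x^*) = \partial f(x^*) + N_{\mathcal{X}}(x^*)$, which in full generality needs a constraint qualification. Here, however, $\mathcal{X}$ is a box with nonempty interior, so for the models in the paper one can invoke the standard qualification ($\mathrm{ri}(\dom f) \cap \mathrm{ri}(\mathcal{X}) \neq \emptyset$, or simply that $\iota_{\mathcal{X}}$ is polyhedral) to justify the sum rule; alternatively, since $\|x\|_1$ is finite-valued everywhere, $\dom f = \RR^n$ and no qualification is needed at all. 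I would state the lemma under the implicit assumption that $f$ is finite-valued (as $\|\cdot\|_1$ and the quadratically perturbed variants in \eqref{cent} are), so that $\partial f(x^*) \neq \emptyset$ and the sum rule holds automatically, and then the ``$\Rightarrow$'' direction follows from $0 \in \partial f(x^*) + N_{\mathcal{X}}(x^*)$ by writing $0 = h + g$ with $h \in \partial f(x^*)$, $g \in N_{\mathcal{X}}(x^*)$, noting $\langle -h, y - x^* \rangle = \langle g, y - x^* \rangle \leq 0$; the ``$\Leftarrow$'' direction is immediate since $\langle h, y - x^* \rangle \geq 0$ and convexity give $f(y) \geq f(x^*) + \langle h, y - x^* \rangle \geq f(x^*)$ for all $y \in \mathcal{X}$.
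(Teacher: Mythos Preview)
Your proposal is correct and follows essentially the same route as the paper: the paper also records the projection characterization (your ``$p=[v]_{\mathcal X}^+$ iff $\langle v-p,y-p\rangle\le 0$'') and the first-order variational inequality as two facts, and then chains the equivalences $0\in\partial_{\mathcal X}^+f(x^*)\Leftrightarrow \exists\,h\in\partial f(x^*)$ with $x^*=[x^*-h]_{\mathcal X}^+\Leftrightarrow \exists\,h$ with $\langle h,y-x^*\rangle\ge 0$ for all $y\in\mathcal X\Leftrightarrow x^*\in\arg\min_{x\in\mathcal X}f(x)$. Your discussion of the constraint qualification needed for the sum rule is a welcome addition that the paper omits; since in the applications $f$ is finite-valued, your observation that no qualification is needed there is exactly right.
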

\begin{proof}
The following two facts will be used in our deduction:

\textbf{Fact 1.} $z\in[x]^+_\mathcal{X} \Leftrightarrow  \langle z-x, z-y\rangle\leq 0, \forall y\in \mathcal {X}$;

\textbf{Fact 2.} $\forall y\in \mathcal{X}, \exists h\in\partial f(x^*), \langle h, y-x^*\rangle \geq 0\Leftrightarrow\ x^*\in\arg\min_{x\in\mathcal{X}}f(x)$.

With these two facts, we derive that
 \begin{subequations}
\begin{align}
0\in \partial_{\mathcal{X},\tau}^+ f(x^*) &\Leftrightarrow   x^*\in [x^*-\partial f(x^*)]_\mathcal{X}^+  \\
&\Leftrightarrow   \exists h\in\partial f(x^*), \textrm{such that}~~x^*=[x^*- h]_\mathcal{X}^+ \\
&\Leftrightarrow   \langle x^*-(x^*- h), x^*-y\rangle \geq 0, \forall y\in\mathcal{X} \\
&\Leftrightarrow   \langle h, y-x^*\rangle \geq 0, \exists h\in \partial f(x^*), \forall y\in\mathcal{X}\\
&\Leftrightarrow    x^*\in\arg\min_{x\in\mathcal{X}}f(x).
\end{align}
\end{subequations}
This completes the proof.
\end{proof}

\subsection{Projected shrinkage operator}
In this part, we build an important formulation that links the proximal point operator and the projected shrinkage operation. In order to  establish that formulation, we need two lemmas.
\begin{lemma}\label{shrinkproj}
Let $\shrink$ be the shrinkage operator defined by $\shrink(s)=\sign(s)\max\{|s|-1,0\}$ and let $I\in T_1$. Then, we always have that
\begin{equation}\label{shrinksign}
[\shrink(q)]_I^+=[q-\sign(c)]_I^+
\end{equation}
holds for arbitrary  $q \in \RR$, where $c$ appears in the definition of $I$.
\end{lemma}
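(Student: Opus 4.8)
The plan is to reduce the claimed identity to the elementary fact that Euclidean projection onto an interval is the monotone ``clamping'' map, combined with the special geometry of the class $T_1$. First I would record the structural observation that for every $I\in T_1$ the number $c$ is precisely the endpoint of $I$ nearest the origin, that $I\subset(0,\infty)$ exactly when $c>0$ and $I\subset(-\infty,0)$ exactly when $c<0$, and hence $\sign(c)\in\{+1,-1\}$ is well defined with $0\notin I$. I would then isolate the single non-definitional input that is needed: for any interval $I$, any two reals that both lie $\le\inf I$ are sent by $[\cdot]_I^+$ to the common value $\inf I$, and any two reals that both lie $\ge\sup I$ are sent to $\sup I$. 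This is immediate from $[x]_I^+=\min\{\max\{x,\inf I\},\sup I\}$, the unbounded branches being vacuous.

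Next I would split on the sign of $c$. Suppose $c>0$, so that $\sign(c)=1$, $\inf I=c>0$, and the goal is $[\shrink(q)]_I^+=[q-1]_I^+$. If $q\ge1$, then $\shrink(q)=q-1$ identically and there is nothing to prove. If $q<1$, a one-line inspection of the branches of $\shrink(s)=\sign(s)\max\{|s|-1,0\}$ gives $\shrink(q)\le0$; since also $q-1<0$, both $\shrink(q)$ and $q-1$ lie $\le c=\inf I$, and hence are projected onto the common value $c$. The case $c<0$ is the mirror image: then $\sign(c)=-1$, $\sup I=c<0$, and we must show $[\shrink(q)]_I^+=[q+1]_I^+$; for $q\le-1$ we have $\shrink(q)=q+1$ outright, while for $q>-1$ one checks $\shrink(q)\ge0$, so that both $\shrink(q)$ and $q+1$ lie $\ge c=\sup I$ and are clamped to $c$.

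I do not expect any genuine obstacle here; the statement is essentially a short case check. The only point requiring a little care is the small-$q$ regime, where one must verify the sign bounds $q<1\Rightarrow\shrink(q)\le0$ and $q>-1\Rightarrow\shrink(q)\ge0$ uniformly over the three branches defining $\shrink$, and observe that in that regime both $\shrink(q)$ and the shifted point $q-\sign(c)$ fall outside $I$ on the far side of the endpoint $c$. Once those elementary bounds are in hand, the equality drops out of the clamping description of $[\cdot]_I^+$; in particular the scaling identity of Lemma~\ref{projp} is not needed for this lemma.
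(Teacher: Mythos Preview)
Your argument is correct and follows essentially the same route as the paper: a direct case analysis on the sign of $c$ combined with the piecewise definition of $\shrink$ and the clamping description of $[\cdot]_I^+$. Your two-subcase split ($q\ge1$ versus $q<1$ when $c>0$, and its mirror) is marginally more economical than the paper's three-way split on the branches of $\shrink$, but the content is identical.
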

\begin{proof}
Recall that $I=[c, \infty)$ with $c>0$, or $I=(-\infty, c]$ with $c<0$, or $
 I=[c, d]$ with $0<c<d $,  or $I=[d, c],$ with $d<c<0$. So it is easy to observe the following fact:

If $c>0$, then $[q+1]_I^+\equiv c$ for each $q<-1$ and $[0]_I^+\equiv c$;

If $c<0$, then $[q-1]_I^+\equiv c$ for each $q>1$ and $[0]_I^+\equiv c$.

Thus, together with the definition of the shrinkage operator, for $c>0$ we have that
\begin{eqnarray}
[\shrink(q)]_I^+
&=&\left\{\begin{array}{lc}
[q-1]_I^+, & q>1 \\
{[0]_I^+}, & -1\leq q\leq 1 \\
{[q+1]_I^+}, & q<-1
\end{array}  \right. \\
&=&\left\{\begin{array}{lc}
[q-1]_I^+, & q>1 \\
{c}, & q\leq 1
\end{array}  \right.=[q-1]_I^+
\end{eqnarray}

and for $c<0$ have that
\begin{equation}
[\shrink(q)]_I^+
=\left\{\begin{array}{lc}
[q+1]_I^+, & q<-1 \\
{c}, & q\geq -1
\end{array}\right.=[q+1]_I^+.
\end{equation}
On the other hand, $[q-\sign(c)]_I^+=[q-1]_I^+$ when $c>0$ and $[q-\sign(c)]_I^+=[q+1]_I^+$ when $c<0$. So the relationship \eqref{shrinksign} holds.
\end{proof}

\begin{lemma}\label{shrinkproj2}
Let $I\in T_1\bigcup T_2$ and $I_\tau(t)=\tau\cdot|t|+\delta_I(t)$ where $\delta_I(\cdot)$ is the indicator function. Then, we always have that
\begin{equation}\label{shrinkprox}
[\tau\cdot\shrink(\tau^{-1}q)]_I^+=\prox_{I_\tau(\cdot)}(q)
\end{equation}
holds for arbitrary  $q \in \RR$.
\end{lemma}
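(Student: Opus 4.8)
The plan is to unfold the definition of the proximal operator and recognize the right-hand side of \eqref{shrinkprox} as the projection onto $I$ of the \emph{unconstrained} minimizer. By definition,
\[
\prox_{I_\tau(\cdot)}(q)=\argmin_{t}\Big(\tau|t|+\delta_I(t)+\tfrac12(t-q)^2\Big)=\argmin_{t\in I}h_q(t),\qquad h_q(t):=\tau|t|+\tfrac12(t-q)^2 .
\]
Thus the lemma reduces to the identity $\argmin_{t\in I}h_q(t)=\big[\,t^*\,\big]_I^+$ with $t^*:=\argmin_{t\in\RR}h_q(t)=\tau\cdot\shrink(\tau^{-1}q)$, which is exactly the left-hand side of \eqref{shrinkprox}.

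First I would record that the scalar soft-threshold is the unconstrained minimizer of $h_q$, i.e. $\argmin_{t\in\RR}h_q(t)=\tau\cdot\shrink(\tau^{-1}q)$. This follows either from the first-order optimality condition $0\in\tau\,\partial|t|+(t-q)$, or, in the spirit of Lemma \ref{projp}, from the substitution $t=\tau s$, which turns $\min_t\big(\tau|t|+\tfrac12(t-q)^2\big)$ into $\tau\cdot\min_s\big(|s|+\tfrac12(s-\tau^{-1}q)^2\big)$, whose minimizer is $\shrink(\tau^{-1}q)$.

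Second, I would prove the reduction $\argmin_{t\in I}h_q(t)=[\,t^*\,]_I^+$ for every closed interval of the types occurring in $T_1\cup T_2$. Since $h_q$ is strictly convex it is nonincreasing on $(-\infty,t^*]$ and nondecreasing on $[t^*,\infty)$; comparing this with the explicit projection onto a half-line or bounded interval (namely $[x]_I^+=\max\{x,c\}$, $\min\{x,c\}$, or $\mathrm{median}(c,x,d)$ according to whether $I=[c,\infty)$, $(-\infty,c]$, or $I=[c,d]$) gives in each case that the constrained minimizer is exactly $[t^*]_I^+$: if $t^*\in I$ it equals $t^*$, and otherwise $h_q$ is monotone on $I$ so the minimum is attained at the endpoint of $I$ nearest $t^*$, which is $[t^*]_I^+$. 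Substituting $t^*=\tau\cdot\shrink(\tau^{-1}q)$ yields \eqref{shrinkprox}.

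I expect the only delicate point to be the case $I\in T_2$, where $0$ lies in the interior of $I$, so $|t|$ is nonsmooth \emph{inside} the feasible set and one genuinely needs the soft-thresholding form of $t^*$ rather than replacing $|t|$ by a linear function. For $I\in T_1$, where $|t|\equiv\sign(c)\,t$ on $I$ and $h_q$ is merely a shifted quadratic, the claim can alternatively be obtained by chaining Lemma \ref{projp} with Lemma \ref{shrinkproj}, which gives $[\tau\cdot\shrink(\tau^{-1}q)]_I^+=[q-\tau\sign(c)]_I^+=\prox_{I_\tau(\cdot)}(q)$ directly; all remaining steps are elementary one-dimensional convexity arguments.
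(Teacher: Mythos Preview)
Your argument is correct and in fact conceptually cleaner than the paper's. Both proofs start the same way, by rewriting $\prox_{I_\tau(\cdot)}(q)=\argmin_{t\in I}h_q(t)$ with $h_q(t)=\tau|t|+\tfrac12(t-q)^2$. From there the paper splits into two tracks: for $I\in T_1$ it replaces $|t|$ by $\sign(c)\,t$, applies Lemma~\ref{lem:optcond} to obtain $t^*=[q-\tau\sign(c)]_I^+$, and then rewrites this via Lemmas~\ref{projp} and~\ref{shrinkproj} (exactly the alternative you mention); for $I\in T_2$ it does not compute the minimizer directly but instead invokes Lemma~\ref{lem:optcond} to get the inclusion $t^*\in[q-\tau\,\partial|t^*|]_I^+$ and then \emph{verifies} that $p(q)=[\tau\cdot\shrink(\tau^{-1}q)]_I^+$ satisfies this inclusion by a three-case analysis on the sign of $p(q)$.

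Your route replaces all of this by the single elementary observation that a strictly convex scalar function is monotone on each side of its unique unconstrained minimizer, so its constrained minimum over any closed interval is the projection of that minimizer onto the interval. This handles $T_1$ and $T_2$ uniformly and bypasses both Lemma~\ref{lem:optcond} and Lemma~\ref{shrinkproj} entirely; in particular you avoid the sign-by-sign verification for $T_2$. The paper's approach, by contrast, makes the KKT-style optimality structure explicit, which is perhaps more in keeping with how the surrounding machinery (projected subgradients, Lemma~\ref{lem:optcond}) is developed, but at the cost of a longer case analysis.
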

\begin{proof}
By the definition of proximal point operator, we derive that
 \begin{subequations}
\begin{align}
t^*:=\prox_{I_\tau(\cdot)}(q) =\arg\min_{t\in\RR} I_\tau(t)+\frac{1}{2}(t-q)^2  =\arg\min_{t\in I}\tau\cdot|t|+\frac{1}{2}(t-q)^2\label{prob1}.
\end{align}
\end{subequations}

If $I\in T_1$, then $|t|=\sign(c)\cdot t$ and hence
 \begin{subequations}
\begin{align*}
t^*=\arg\min_{t\in I}\tau\cdot|t|+\frac{1}{2}(t-q)^2=\arg\min_{t\in I}\tau\cdot\sign(c)\cdot t+\frac{1}{2}(t-q)^2.
\end{align*}
\end{subequations}
Applying Lemma \ref{lem:optcond} yields to $t^*=[q-\tau\cdot\sign(c)]_I^+$. Together with Lemmas \ref{projp} and \ref{shrinkproj}, we derive that
$$t^*=[\tau\cdot(\tau^{-1}q-\sign(c))]_I^+=\tau\cdot[\tau^{-1}q-\sign(c)]_{I/\tau}^+=\tau\cdot [\shrink(\tau^{-1}q)]_{I/\tau}^+=[\tau\cdot\shrink(\tau^{-1}q)]_I^+.$$
So relationship \eqref{shrinkprox} holds when $I\in T_1$.

If $I\in T_2$, then again invoking Lemma \ref{lem:optcond} yields to $t^*\in [q-\tau\cdot\partial |t|_{t=t^*}]_I^+$. Such $t^*$ must be the unique solution to problem \eqref{prob1} because its objective functions is strongly convex. Thus, it suffices to show that $p(q)= [\tau\cdot\shrink(\tau^{-1}q)]_I^+$ satisfies the following inclusion:
$$t\in [q-\tau\cdot \partial |t|]_I^+.$$
Now, we check $p(q)$ case-by-case:

\textbf{Case 1: $p(q)>0$}. Since interval $I=[c, d]\in T_2$ satisfies $c<0<d$, condition $p(q)>0$ implies $\shrink(\tau^{-1}q)>0$ and hence $\tau^{-1}q>1$. Then, by Lemmas \ref{projp} and \ref{shrinkproj}, and together with the definition of shrinkage operator we derive that
$$[ q-\tau\cdot \partial \|p(q)\|_1]^+_I=[q-\tau]^+_I= [\tau\cdot(\tau^{-1}q-1)]^+_I=\tau\cdot[\tau^{-1}q-1]^+_{I/\tau} = \tau\cdot[\shrink(\tau^{-1}q)]_{I/\tau}^+=[\tau\cdot\shrink(\tau^{-1}q)]_I^+=p(q).$$

\textbf{Case 2: $p(q)<0$}. Condition $p(q)<0$ implies $\tau^{-1}q < -1$. Then, similarly to the argument in Case 1, we have that
$$[ q-\tau\cdot \partial \|p(q)\|_1]^+_I=[q-\tau]^+_I= [\tau\cdot(\tau^{-1}q+1)]^+_I=\tau\cdot[\tau^{-1}q+1]^+_{I/\tau} = \tau\cdot[\shrink(\tau^{-1}q)]_{I/\tau}^+=[\tau\cdot\shrink(\tau^{-1}q)]_I^+=p(q).$$

\textbf{Case 3: $p(q)=0$}. Condition $p(q)=0$ implies $-1\leq \tau^{-1}\cdot q\leq 1$. Then, noting the fact that $[-1, 1]=\partial \|0\|_1$, we have that
$$[q- \tau\cdot\partial \|p(q)\|_1]^+_I=[ q- \tau\cdot\partial \|0\|_1]^+_I=\tau\cdot[ \tau^{-1}q- \cdot\partial \|0\|_1]^+_{I/\tau}\ni 0.$$
This completes the proof.

\end{proof}

Now, we are ready to build the most important formulation in this study.
\begin{corollary}\label{proshr}
Define the projected shrinkage operator $[\shrink(v)]_\mathcal{X}^+$ for a vector $v\in\RR^n$ via
$$\left([\shrink(v)]_\mathcal{X}^+\right)_i=[\shrink(v_i)]_{I_i}^+, i= 1, 2, \cdots, n.$$
And let $\mathcal{X}_\tau(x)=\tau\cdot\|x\|_1+\delta_\mathcal{X}(x)$.
Then, it holds
\begin{equation}\label{key}
\boxed{[\tau\cdot\shrink(\tau^{-1}v)]_\mathcal{X}^+=\prox_{\mathcal{X}_\tau(\cdot)}(v)}.
\end{equation}
\end{corollary}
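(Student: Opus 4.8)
The plan is to reduce this vector identity to the scalar identity already proved in Lemma \ref{shrinkproj2} by exploiting full separability. First I would note that, because $\mathcal{X}=I_1\times I_2\times\cdots\times I_n$, the indicator function splits as $\delta_\mathcal{X}(x)=\sum_{i=1}^n\delta_{I_i}(x_i)$, and since $\|x\|_1=\sum_{i=1}^n|x_i|$, the function $\mathcal{X}_\tau(x)=\tau\cdot\|x\|_1+\delta_\mathcal{X}(x)$ is fully separable:
$$\mathcal{X}_\tau(x)=\sum_{i=1}^n\big(\tau\cdot|x_i|+\delta_{I_i}(x_i)\big)=\sum_{i=1}^n (I_i)_\tau(x_i),$$
where $(I_i)_\tau(t)=\tau\cdot|t|+\delta_{I_i}(t)$ is exactly the scalar function appearing in Lemma \ref{shrinkproj2}. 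Each summand is closed proper convex (the $I_i$ are closed intervals, and $I_i\cap\{\,\cdot\,\}$ is nonempty), so $\prox_{\mathcal{X}_\tau(\cdot)}$ is well defined and, by the separable rule \eqref{fulls}, acts coordinatewise: $\big(\prox_{\mathcal{X}_\tau(\cdot)}(v)\big)_i=\prox_{(I_i)_\tau(\cdot)}(v_i)$.

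Next I would invoke Lemma \ref{shrinkproj2} coordinate by coordinate: since each $I_i\in T_1\cup T_2$, we have $\prox_{(I_i)_\tau(\cdot)}(v_i)=[\tau\cdot\shrink(\tau^{-1}v_i)]_{I_i}^+$ for every $i$. On the other side, by the definition of the projected shrinkage operator $[\,\cdot\,]_\mathcal{X}^+$ given in the statement (componentwise shrinkage followed by componentwise projection onto the $I_i$), the vector $[\tau\cdot\shrink(\tau^{-1}v)]_\mathcal{X}^+$ has $i$-th coordinate equal to $[\tau\cdot\shrink(\tau^{-1}v_i)]_{I_i}^+$. Matching the $i$-th coordinates of both sides for all $i=1,2,\ldots,n$ then yields \eqref{key}.

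I do not anticipate a genuine obstacle here: all the analytic content was already carried by Lemmas \ref{projp}--\ref{shrinkproj2}, and this corollary is essentially the bookkeeping step that lifts the scalar result to vectors. The only points needing a little care are (i) checking that $\delta_\mathcal{X}$ genuinely separates over the product structure so that the fully separable formula \eqref{fulls} applies, and (ii) being consistent about the reading of the vector operator $[\tau\cdot\shrink(\tau^{-1}v)]_\mathcal{X}^+$ — namely that the scalars $\tau$ and $\tau^{-1}$ distribute into each coordinate before the componentwise projection, which is precisely the coordinatewise form of Lemma \ref{shrinkproj2}.
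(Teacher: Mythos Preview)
Your proposal is correct and matches the paper's own proof essentially line for line: decompose $\mathcal{X}_\tau$ as a separable sum $\sum_i(\tau|x_i|+\delta_{I_i}(x_i))$, apply the separability rule \eqref{fulls} for proximal operators, and then invoke Lemma \ref{shrinkproj2} in each coordinate. The paper compresses this into a single sentence, but the content is identical.
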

\begin{proof}
Noting that $\mathcal{X}_\tau(x)=\sum_{i=1}^n
\tau|x_i|+\delta_{I_i}(x_i)$ and the property \eqref{fulls}, together with Lemma \ref{shrinkproj2}, the conclusion follows.
\end{proof}
The significance of formulation \eqref{key} is two-fold: the expression based on proximal point operator will be used for convergence analysis; whilst that expressed by the projected shrinkage operator is for computational consideration due to its simplicity.

\section{Projected shrinkage algorithm}
In this section,  we derive a Lagrange dual problem of \eqref{cent} and the ProShrink algorithm for solving it. Following the line of proof thought in paper \cite{z3}, we prove the convergence of both the primal and dual point sequences of the ProShrink algorithm.
\subsection{Lagrange dual analysis}
The Lagrangian of the augmented convex model (\ref{cent}) is
\begin{equation}
L(x, y)= \|x\|_1 +\frac{1}{2\tau} \|x-u\|_2^2 +\langle y, b-A x\rangle.
\end{equation}
The Lagrange dual function is
\begin{equation}
D(y)=\Min_{x\in\mathcal{X}}L(x,y).
\end{equation}
For any vector $y\in \RR^m$, the $x$-minimization problem above is a strongly convex program and hence has a unique solution $x^*(y)$ that satisfies
  \begin{subequations}
\begin{align}
x^*(y) &=\arg\min_{x\in \mathcal{X}} L(x, y)=\|x\|_1 +\frac{1}{2\tau} \|x-u\|_2^2 +\langle y, b-A x\rangle \\
&=\arg\min_{x\in \mathcal{X}} \|x\|_1 +\frac{1}{2\tau}\|x-u-\tau\cdot A^Ty\|_2^2\\
&=\arg\min_{x\in \mathcal{X}} \tau\cdot\|x\|_1 +\frac{1}{2}\|x-u-\tau\cdot A^Ty\|_2^2=\prox_{\mathcal{X}_\tau(\cdot)}(u+\tau\cdot A^Ty),
\end{align}
\end{subequations}
where $\mathcal{X}_\tau(x)=\tau\cdot\|x\|_1+\delta_\mathcal{X}(x)$. By formulation \eqref{key} in Corollary \ref{proshr}, we obtain
\begin{equation}\label{primaldual}
x^*(y)=\prox_{\mathcal{X}_\tau(\cdot)}(u+\tau\cdot A^Ty) = [\tau\cdot\shrink(\tau^{-1}u +A^Ty)]_\mathcal{X}^+.
\end{equation}

Now, $D(y)= L(x^*(y),y)=L([\tau\cdot\shrink(\tau^{-1}u +A^Ty)]^+_\mathcal{X}, y)$. Thus, we can write down the Lagrange dual problem of (\ref{cent}) as follows:
\begin{equation}\label{Dual}
\Max_y  L([\tau\cdot\shrink(\tau^{-1}u + A^Ty)]^+_\mathcal{X}, y).
\end{equation}
It is well known in convex analysis \cite{r} that the dual objective function $L([\tau\cdot\shrink(\tau^{-1}u + A^Ty)]^+_\mathcal{X}, y)$ is gradient-Lipschitz-continuous due to the strong convexity of the primal objective function $\|x\|_1 +\frac{1}{2\tau} \|x-u\|_2^2$. And moreover, the gradient of dual objective function is given by $$\nabla D(y) = b-Ax^*(y)=b-A[\tau\cdot\shrink(\tau^{-1}u + A^Ty)]^+_\mathcal{X}$$
Each solution to the dual problem \eqref{Dual} can generate the unique solution to the primal problem \eqref{cent} via formulation \eqref{primaldual}. This fact is stated in the following lemma.

\begin{lemma}\label{dset}
 Let $x^*$ be the unique solution to problem \eqref{cent} and  $\mathcal{X}\bigcap \{x: Ax=b\}\neq \emptyset$. Then the dual solution set to problem \eqref{Dual} is
\begin{equation}\label{dualsset}
\mathcal{Y}=\left\{y:  x^*=[\tau\cdot\shrink(\tau^{-1}u + A^Ty)]^+_\mathcal{X}\right\},
\end{equation}
  which is  nonempty and convex.
  \end{lemma}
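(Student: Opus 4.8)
The plan is to characterise $\mathcal{Y}$ as the zero set of $\nabla D$ and then read off both assertions from facts already recorded in the excerpt: for every $y\in\RR^m$ the inner problem has the \emph{unique} minimizer $x^*(y)=[\tau\cdot\shrink(\tau^{-1}u+A^Ty)]^+_\mathcal{X}=\arg\min_{x\in\mathcal{X}}L(x,y)=\prox_{\mathcal{X}_\tau(\cdot)}(u+\tau\cdot A^Ty)$, and $D$ is finite-valued, concave (a pointwise infimum over $x$ of functions affine in $y$), and continuously differentiable with $\nabla D(y)=b-Ax^*(y)$.

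First I would observe that \eqref{Dual} is the unconstrained maximization of the concave $C^1$ function $D$ over $\RR^m$, so $y$ is dual optimal if and only if $\nabla D(y)=0$, i.e. if and only if $Ax^*(y)=b$. Hence it suffices to prove the set identity $\{y:Ax^*(y)=b\}=\{y:x^*(y)=x^*\}$. The inclusion ``$\supseteq$'' is immediate from primal feasibility of $x^*$: if $x^*(y)=x^*$ then $Ax^*(y)=Ax^*=b$. For ``$\subseteq$'', assume $Ax^*(y)=b$. Since $x^*(y)=\prox_{\mathcal{X}_\tau(\cdot)}(u+\tau\cdot A^Ty)\in\mathcal{X}$, the point $x^*(y)$ is feasible for \eqref{cent}; and because it minimizes $L(\cdot,y)$ over $\mathcal{X}$, for every feasible $x$ (so $Ax=b$) we get $\|x^*(y)\|_1+\tfrac{1}{2\tau}\|x^*(y)-u\|_2^2=L(x^*(y),y)\le L(x,y)=\|x\|_1+\tfrac{1}{2\tau}\|x-u\|_2^2$, where the inner-product terms $\langle y,b-Ax^*(y)\rangle$ and $\langle y,b-Ax\rangle$ both vanish. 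Thus $x^*(y)$ attains the primal optimal value; by strong convexity of the primal objective the minimizer is unique, so $x^*(y)=x^*$ and $y\in\mathcal{Y}$. This proves \eqref{dualsset}, and convexity of $\mathcal{Y}$ then follows at once: as the set of maximizers of the concave function $D$ over $\RR^m$, it is convex (if $D(y_1)=D(y_2)=\max D$ then concavity forces $D$ to equal $\max D$ along the whole segment $[y_1,y_2]$).

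It remains to show $\mathcal{Y}\neq\emptyset$, and this is the step I expect to be the main obstacle. The feasible region $\mathcal{X}\cap\{x:Ax=b\}$ is the intersection of a box with an affine subspace, hence polyhedral; it is nonempty by hypothesis, and the primal objective is bounded below by $0$, so the primal optimal value is finite. For a convex program all of whose constraints are polyhedral, a Lagrange (KKT) multiplier exists at an optimal point with no Slater-type qualification needed (see \cite{r}); applied here this yields $y^*\in\RR^m$ with $x^*\in\arg\min_{x\in\mathcal{X}}L(x,y^*)$, and by uniqueness of that minimizer $x^*=x^*(y^*)$, so $y^*\in\mathcal{Y}$. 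The delicacy is precisely that \eqref{cent} need not satisfy Slater's condition (the equalities $Ax=b$ may meet $\mathcal{X}$ only on its boundary, and $A$ may be rank-deficient), so one must invoke the refined duality theorem valid for polyhedral constraint sets rather than the generic one; everything else is bookkeeping with the formula $\nabla D(y)=b-Ax^*(y)$ and the uniqueness of the strongly convex primal minimizer.
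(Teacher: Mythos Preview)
Your proof is correct and follows the same overall strategy as the paper: identify the dual solution set with the zero set of $\nabla D$, then prove the set identity $\{y:Ax^*(y)=b\}=\{y:x^*(y)=x^*\}$ by double inclusion, using uniqueness of the strongly convex primal minimizer for the nontrivial direction. Two minor differences are worth noting. First, the paper establishes the required curvature of $D$ by computing $\langle\nabla D(y)-\nabla D(\tilde y),\,y-\tilde y\rangle$ through the firm nonexpansiveness of $\prox_{\mathcal{X}_\tau}$ (Lemma~\ref{lem1}), whereas you use the more elementary observation that $D$ is a pointwise infimum of functions affine in $y$. Second, the paper asserts nonemptiness of $\mathcal{Y}$ directly from the feasibility assumption $\mathcal{X}\cap\{x:Ax=b\}\neq\emptyset$ without further comment, while you correctly flag that Slater's condition may fail here and invoke the polyhedral refinement of Lagrangian duality from \cite{r} to guarantee a multiplier; this makes your existence argument more rigorous than the paper's.
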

   \begin{proof}
  By Lemma \ref{lem1}, for $\forall y, \tilde{y}\in\RR^m$ we have that
  \begin{subequations}
\begin{align*}
&\langle \nabla D(y)-\nabla D(\tilde{y}), y-\tilde{y}\rangle \\
=&\langle [\tau \cdot\shrink(\tau^{-1}u +A^Ty)]^+_{\mathcal{X}}-[\tau \cdot \shrink(\tau^{-1}u + A^T\tilde{y})]^+_{\mathcal{X}},A^Ty-A^T\tilde{y}\rangle \\
=&\tau^{-1} \langle \prox_{\mathcal{X}_\tau(\cdot)}(u+\tau\cdot A^Ty)-\prox_{\mathcal{X}_\tau(\cdot)}(u+\tau\cdot A^T\tilde{y}),\tau\cdot A^Ty-\tau\cdot A^T\tilde{y}\rangle \\
\geq& \tau^{-1}  \|\prox_{\mathcal{X}_\tau(\cdot)}(u+\tau\cdot A^Ty)-\prox_{\mathcal{X}_\tau(\cdot)}(u+\tau\cdot A^T\tilde{y})\|_2^2\geq 0,
\end{align*}
\end{subequations}
which implies that the dual objective function $D(y)$ is convex. Thus, the dual solution set is
\begin{subequations}
\begin{align}
\mathcal{Y}^{'} & = \{y: \nabla D(y)=0\} \\
& =  \{y: A [\tau \cdot \shrink(\tau^{-1}u +A^Ty)]^+_\mathcal{X}=b\},
\end{align}
\end{subequations}
which must be nonempty and convex by assumption $\mathcal{X}\bigcap \{x: Ax=b\}\neq \emptyset$ and the convexity of $D(y)$. Now, it suffices to show $\mathcal{Y}=\mathcal{Y}^{'}$. On one hand, we have $\mathcal{Y}\subseteq \mathcal{Y}^{'}$ since $Ax^*=b$. On the other hand, let $y^{'}\in \mathcal{Y}^{'}$, i.e., $y^{'}$ is some dual solution. Then, $x^{'}=[\tau\cdot\shrink(\tau^{-1}u + A^Ty)^{'}]^+_\mathcal{X}$ is a primal solution and it must equal $x^*$ by uniqueness. So $y^{'}\in\mathcal{Y}$ and hence $\mathcal{Y}^{'}\subseteq \mathcal{Y}$, which completes the proof.
\end{proof}

\subsection{Algorithm schemes}
Applying the gradient iteration to the dual objective $D(y)$ gives:
\begin{equation}\label{form1}
y^{k+1}=y^k+h(b-A[\tau \cdot \shrink(\tau^{-1}u +A^Ty^{k})]_\mathcal{X}^+),
\end{equation}
where $h>0$ is the step size whose range shall be studied later for convergence. By setting $x^{k+1}= [\tau \cdot \shrink(\tau^{-1}u +A^Ty^{k})]_\mathcal{X}^+$, we obtain the
equivalent iteration in the primal-dual form:
  \begin{eqnarray}\label{mainalg}
\left\{\begin{array}{ll}
x^{k+1}= [\tau \cdot \shrink(\tau^{-1}u +A^Ty^{k})]_\mathcal{X}^+ \\
y^{k+1}=y^{k}+h (b-Ax^{k+1}).
\end{array} \right.
\end{eqnarray}
Because the projected shrinkage operator is involved, we call \eqref{mainalg} projected shrinkage algorithm.
 Recall that the linearized Bregman (LBreg) algorithm \cite{y1,c1} has the following form:
  \begin{eqnarray}
\left\{\begin{array}{ll}
x^{k+1}= \tau \cdot \shrink (A^Ty^{k}) \\
y^{k+1}=y^{k}+h (b-Ax^{k+1}).
\end{array} \right.
\end{eqnarray}
Therefore, the ProShrink algorithm can be viewed as a generalization of the LBreg algorithm.

Applying Nesterov's accelerated scheme \cite{n1}, we obtain an accelerated ProShrink algorithm with the following form:
 \begin{eqnarray}\label{acc1}
\left\{\begin{array}{ll}
x^{k+1}= [\tau \cdot \shrink (\tau^{-1}u + A^Ty^{k})]^+_\mathcal{X}; \\
z^{k+1}=y^{(k)}+h (b-Ax^{k+1});\\
\gamma_k=(\sqrt{\theta_k+4}-\theta_k)/2;\\
\beta_{k+1} =(1-\theta_k)\gamma_k, \theta_{k+1}=\theta_k\gamma_k; \\
y^{k+1}= z^{k+1}+\beta_{k+1}(z^{k+1}-z^k).
\end{array} \right.
\end{eqnarray}
In addition, it is predictable that the adaptive restart technique developed in \cite{oc} can further accelerate the scheme \eqref{acc1}; Such acceleration for the LBreg algorithm was observed in paper \cite{z1}.

Now, let us return to models \eqref{BP} and \eqref{Aug}. Model \eqref{Aug} can be solved by ProShrink \eqref{mainalg} or its acceleration \eqref{acc1} with $u=0$. To solve model \eqref{BP}, we apply the proximal point algorithm and obtain a series of subproblems as follows:
$$z^{k+1}=\arg\min \{\|x\|_1+\frac{1}{2\lambda_k}\|x-z^k\|^2, Ax=b, x\in\mathcal{X}\}$$
where $\lambda_k$ are positive parameters. Each subproblem above can be well solved by  ProShrink \eqref{mainalg} as well. We write down the iteration scheme without detailed derivation:
  \begin{eqnarray}\label{mainalg2}
\left\{\begin{array}{ll}
x^{i+1}= [\lambda_k \cdot \shrink(\frac{1}{\lambda_k}z^k + A^Ty^{i})]_\mathcal{X}^+ \\
y^{i+1}=y^{i}+h (b-Ax^{i+1}).
\end{array} \right.
\end{eqnarray}
The subproblem can also be solved by the accelerated ProShrink scheme \eqref{acc1}.

At last, the standard forward-backward splitting algorithm for model \eqref{PFB} is
\begin{equation}
x^{k+1}=\prox_{\mathcal{X}_{\gamma_k}(\cdot)}\left(x^k-\frac{\gamma_k}{\lambda}A^T(Ax^k-b)\right),
\end{equation}
where $\gamma_k$ are the step sizes. The main difficulty of the above iteration is to compute the proximal point operator of $\mathcal{X}_{\gamma_k}(\cdot)$.
Utilizing formulation \eqref{key}, this can be overcome and the iteration can be simplified into
\begin{equation}
x^{k+1}=[\gamma_k\cdot\shrink(\gamma^{-1}_kx^k-\lambda^{-1}A^T(Ax^k-b))]^+_\mathcal{X}.
\end{equation}

\subsection{Convergence analysis}
In this part, we prove the convergence of primal sequence $\{x^k\}$ and dual sequence $\{y^k\}$ in iteration (\ref{mainalg}).
\begin{theorem}\label{thm:cvg}
Set step size $h\in (0, \frac{2}{\tau\|A\|^2})$ and $y^0=0$ in iteration (\ref{mainalg}). Let $x^*$ be the unique minimizer to problem (\ref{Aug}) and $\mathcal{Y}$ be the solution set to problem \eqref{Dual}. Then, $\lim_{k\rightarrow +\infty}x^k=x^*$, and there exists a point $\bar{y}\in\mathcal{Y}$ such that $\lim_{k\rightarrow +\infty} y^k=\bar{y}$.
\end{theorem}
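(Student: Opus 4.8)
The plan is to recognize iteration \eqref{mainalg} as a gradient ascent (equivalently, gradient descent on $-D$) applied to the concave dual objective $D(y)$, whose gradient $\nabla D(y)=b-A[\tau\cdot\shrink(\tau^{-1}u+A^Ty)]^+_\mathcal{X}$ is Lipschitz continuous. The first step is to quantify this Lipschitz constant: using formulation \eqref{key} to write $x^*(y)=\prox_{\mathcal{X}_\tau(\cdot)}(u+\tau A^Ty)$ and invoking the firm nonexpansiveness (Lemma \ref{lem1}, part 1), one gets
\[
\|\nabla D(y)-\nabla D(\tilde y)\|_2^2\le \tfrac{1}{\tau}\langle \nabla D(y)-\nabla D(\tilde y),\,A^T(\tilde y-y)\rangle\quad\text{(sign as appropriate)},
\]
which by Cauchy–Schwarz yields that $\nabla D$ is $(\tau\|A\|^2)$-Lipschitz; hence the standard descent lemma applies for any step size $h\in(0,2/(\tau\|A\|^2))$, giving a sufficient-decrease inequality $D(y^{k+1})\ge D(y^k)+c\|y^{k+1}-y^k\|_2^2$ for some $c>0$, and also monotone progress toward the dual optimal value $D^*$ which is finite by Lemma \ref{dset}.

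Next I would run the classical Opial/Fejér-monotonicity argument for gradient descent with a fixed step in the step-size regime $h\in(0,2/L)$. Concretely, for any $y^\ast\in\mathcal{Y}$ one shows $\|y^{k+1}-y^\ast\|_2^2\le\|y^k-y^\ast\|_2^2-h(2/L-h)\|\nabla D(y^k)\|_2^2$ (or the analogous inequality with the $\prox$-based quantities), so $\{\|y^k-y^\ast\|_2\}$ is nonincreasing — hence $\{y^k\}$ is bounded — and $\sum_k\|\nabla D(y^k)\|_2^2<\infty$, forcing $\nabla D(y^k)\to 0$. Any cluster point $\bar y$ of $\{y^k\}$ then satisfies $\nabla D(\bar y)=0$, i.e. $\bar y\in\mathcal{Y}$ by Lemma \ref{dset}. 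Fejér monotonicity with respect to the whole set $\mathcal{Y}$ together with the existence of a cluster point in $\mathcal{Y}$ gives, by the usual two-subsequence argument, that the entire sequence converges: $y^k\to\bar y$ for a single $\bar y\in\mathcal{Y}$.

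Finally, for the primal sequence: since $x^{k+1}=[\tau\cdot\shrink(\tau^{-1}u+A^Ty^k)]^+_\mathcal{X}=\prox_{\mathcal{X}_\tau(\cdot)}(u+\tau A^Ty^k)$ and $\prox_{\mathcal{X}_\tau(\cdot)}$ is Lipschitz continuous (Lemma \ref{lem1}, part 2), continuity of $x\mapsto[\tau\cdot\shrink(\tau^{-1}u+A^Tx)]^+_\mathcal{X}$ gives $x^k\to[\tau\cdot\shrink(\tau^{-1}u+A^T\bar y)]^+_\mathcal{X}$. By the characterization of $\mathcal{Y}$ in \eqref{dualsset} (with $u=0$ as in model \eqref{Aug}), this limit equals the unique primal minimizer $x^*$, completing the proof.

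I expect the main obstacle to be the first step — pinning down the Lipschitz constant of $\nabla D$ cleanly — because it requires carefully tracking the $\tau$-scalings when passing between the projected-shrinkage form and the $\prox$ form via \eqref{key}, and getting the correct sign in the firm-nonexpansiveness inequality so that the bound $L=\tau\|A\|^2$ (matching the stated step-size range $h\in(0,2/(\tau\|A\|^2))$) comes out exactly. Once that constant is correct, the rest is the textbook convergence proof for fixed-step gradient descent on a Lipschitz-smooth convex (here concave) function with a nonempty solution set, adapted as in \cite{z3}; the only mild subtlety there is that $D$ need not be strongly concave, so one must argue convergence of the whole dual sequence via Fejér monotonicity rather than by a contraction estimate.
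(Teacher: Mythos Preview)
Your proposal is correct and follows essentially the same approach as the paper: both use firm nonexpansiveness of $\prox_{\mathcal{X}_\tau(\cdot)}$ (via formula \eqref{key}) to derive a Fej\'er-monotonicity inequality for $\|y^k-\hat y\|_2^2$ with $\hat y\in\mathcal{Y}$, extract a cluster point $\bar y\in\mathcal{Y}$, and conclude full-sequence convergence. The only organizational difference is that the paper's Fej\'er inequality subtracts $h(2\tau^{-1}-h\|A\|^2)\|x^{k+1}-x^*\|_2^2$ rather than $h(2/L-h)\|\nabla D(y^k)\|_2^2$, so primal convergence $x^k\to x^*$ drops out \emph{before} dual convergence, whereas you deduce it afterward from continuity of $\prox$ --- but this is a cosmetic reordering, not a different argument.
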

This theorem can be proved in the same manner as that in paper \cite{z3}. For completeness, we provide a proof below.
\begin{proof}
Let $\hat{y}\in \mathcal{Y}$. By Lemma \ref{dset}, we have $x^*=[\tau\cdot\shrink(\tau^{-1}u +A^T\hat{y})]^+_\mathcal{X}$. Together with $x^{k+1}= [\tau \cdot \shrink(\tau^{-1}u +A^Ty^{k})]_\mathcal{X}^+$ and Lemma \ref{lem1} and Corollary \ref{proshr}, we derive
 \begin{subequations}
\begin{align}
& \langle A^Ty^k-A^T\hat{y}, x^{k+1}- x^*\rangle \\
=&\langle A^Ty^k-A^T\hat{y}, [\tau\cdot\shrink(\tau^{-1}u +A^Ty^{k})]_{\mathcal{X}}^+ - [\tau\cdot\shrink(\tau^{-1}u +A^T\hat{y})]^+_{\mathcal{X}} \rangle \\
= & \langle A^Ty^k-A^T\hat{y}, \prox_{\mathcal{X}_\tau(\cdot)}(u+\tau\cdot A^Ty^{k}) - \prox_{\mathcal{X}_\tau(\cdot)}(u+\tau\cdot A^T\hat{y}) \rangle  \\
\geq&\tau^{-1}\cdot  \|\prox_{\mathcal{X}_\tau(\cdot)}(u+\tau\cdot A^Ty^{k}) - \prox_{\mathcal{X}_\tau(\cdot)}(u+\tau\cdot A^T\hat{y}) \rangle\|_2^2\\
=&\tau^{-1} \cdot \| x^{k+1}- x^*\|_2^2
\end{align}
\end{subequations}
Using this inequality, we have
 \begin{subequations}
\begin{align}
\|y^{k+1}-\hat{y}\|_2^2=&\|y^k-\hat{y} +h (b-Ax^{k+1})\|_2^2 \\
=&\|y^k-\hat{y} +h (Ax^*-Ax^{k+1})\|_2^2 \\
=& \|y^k-\hat{y}\|_2^2 -2h \langle  A^Ty^k- A^T\hat{y}, x^{k+1}- x^*\rangle + h^2\|Ax^*-Ax^{k+1}\|_2^2 \\
\leq & \|y^k-\hat{y}\|_2^2-2h\tau^{-1}\| x^{k+1}- x^*\|_2^2+h^2\|A\|^2\| x^{k+1}- x^*\|_2^2\\
=& \|y^k-\hat{y}\|_2^2-h(2\tau^{-1}-h\|A\|^2)\| x^{k+1}- x^*\|_2^2.
\end{align}
\end{subequations}
Therefore, under the assumption $0<h<\frac{2}{\tau\|A\|^2}$ we can make the following claims:

\textbf{claim 1:} $\|y^{k+1}-\hat{y}\|_2$ is monotonically nonincreasing in $k$ and thus converges to a limit;

\textbf{claim 2:} $\|x^{k+1}-x^*\|_2 $ converges to 0 as $k$ tends to $+\infty$, i.e., $\lim_{k\rightarrow +\infty} x^{k+1}=x^*$.

From claim 1, it follows that $\{y^k\}$ is bounded and thus has a converging subsequence $y^{k_i}$. Let $\bar{y}=\lim_{i\to\infty}y^{k_i}$. By the Lipschitz continuity of proximal point operator in Lemma \ref{lem1} and Corollary \ref{proshr}, we have
 \begin{subequations}
\begin{align}\nonumber
x^*&=\lim_{i\rightarrow \infty} x^{k_i+1}=\lim_{i\rightarrow \infty} [\tau \cdot \shrink(\tau^{-1}u +A^Ty^{k_i+1})]_\mathcal{X}^+\\ \nonumber
&=\lim_{i\rightarrow \infty} \prox_{\mathcal{X}_\tau(\cdot)}(u+\tau\cdot A^Ty^{k_i+1})= \prox_{\mathcal{X}_\tau(\cdot)}(u+\tau\cdot A^T\bar{y})=  [\tau \cdot \shrink(\tau^{-1}u +A^T\bar{y})]_\mathcal{X}^+,\nonumber
\end{align}
\end{subequations}
so $\bar{y}\in \mathcal{Y}$ by Lemma \ref{dset}. Recall $\hat{y}\in\mathcal{Y}$ is arbitrary. Hence, claim 1 holds for $\hat{y}=\bar{y}$. If $\{y^k\}$ had another limit point, then $\|y^{k+1}-\bar{y}\|_2$ would fail to be monotonic. So, $y^k$ converges to $\bar{y}\in\mathcal{Y}$ (in norm).
\end{proof}


%
\section{Numerical experiment}
In the section, we do sparse recovery experiments to demonstrate that adding box constraints can help improve recovery of sparse signals considerably. It was shown in \cite{ly} when the augmented parameter $\tau\geq 10\|x\|_\infty$, the augmented $\ell_1$-norm model \eqref{Augo} is equivalent to classical basis pursuit \eqref{BPo} if the sensing matrix $A$ satisfies certain properties such as null-space property, or restricted isometry property. So we only test models \eqref{Augo} and \eqref{Aug} to observe possible advantages of adding box constraints. In the test, model \eqref{Augo} was solved by the LBreg algorithm and model \eqref{Aug} by the ProShrink algorithm.

We used 100 random pairs $(A, x)$ with matrices $A$ of size $200\times 400$ and vectors $x$ with 400 entries, out of which $s$ were nonzero entries set to $\pm 1$ uniformly randomly for $s=1, 2, 3, \cdots, 80$. Each entry of the sensing  matrix $A$
was sampled independently from the standard Gaussian distribution. Thus, $b=Ax$ are given vectors. A relative error of $10^{-12}$ was considered as an exact recovery; the relative error is defined as $\frac{x-x_o}{x}$ where $x_o$ is finally generated by the LBreg or the ProShrink algorithms. The box-constrained set $\mathcal{X}$ for the ProShrink algorithm was set as $[-1, 1]^{400}$.

We plot the exact recovery rate via sparsity levels in Figure \ref{fig4} from which we see that ProShrink performs remarkably better than LBreg as the sparse level increases. More precisely, when the sparse level is low, both LBreg and ProShrink can well recover sparse signals; but when the sparse level becomes high, the recovery rate by LBreg is worse than that by ProShrink that indicates adding box constraints to the augmented $\ell_1$-norm model \eqref{Augo} indeed improves the recovery rate.

\begin{figure}[ht]
\centering
    \includegraphics[scale =0.5] {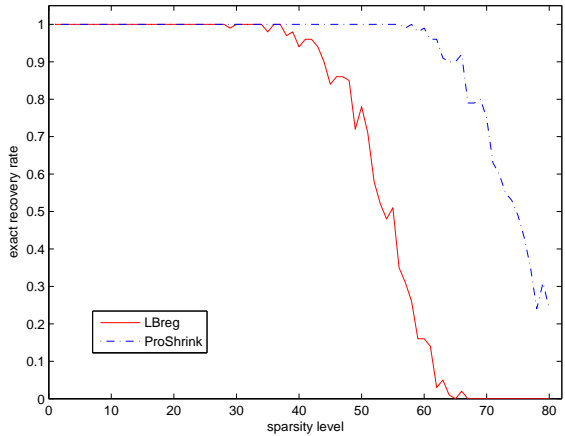}
\caption[Optional caption for list of figures]{Comparison of augmented $\ell_1$ norm models with or without box constraints for sparse recovery (correspond to the ProShrink and the LBreg algorithms separately). }
\label{fig4}
\end{figure}

\section{Conclusion}
In this paper, we proposed the projected shrinkage algorithm for boxed-constrained $\ell_1$-minimization. The most important factor in our study should be the deduction of formulation \eqref{key} that establishes the relationship between projected shrinkage operator and proximal point operator. Numerically, we demonstrated that adding box constraints to classical $\ell_1$-minimization can obtain better performance. However, giving theoretical explanation for this phenomenon is open. We leave it for future work.

\section*{Acknowledgements}
We would like to thank Professor  Wotao Yin (UCLA) for his comments and suggestion on numerical verification and Professor Jian-Feng Cai (Iowa U) for his insight of the projected shrinkage operator.

\small{

}

\end{document}